\newtheorem{theorem}{Theorem}
\newtheorem{lemma}[theorem]{Lemma}
\newtheorem{corollary}[theorem]{Corollary}
\newtheorem{question}[theorem]{Question}
\renewcommand{\r}{\mathrm}
\mathchardef\hy="2D
\begin{document}

\begin{center}
\texttt{Comments, corrections,
and related references welcomed, as always!}\\[.5em]
{\TeX}ed \today
\vspace{2em}
\end{center}

\title{Tensor products of faithful modules}%
\thanks{This preprint is readable online at
\url{http://math.berkeley.edu/~gbergman/papers/unpub/}
}

\subjclass[2010]{Primary: 16D10, 16S99.
} 
\keywords{faithful module;
tensor-product module over a tensor product of algebras.}

\author{George M. Bergman}
\address{Department of Mathematics\\
University of California\\
Berkeley, CA 94720-3840, USA}
\email{gbergman@math.berkeley.edu}

\begin{abstract}
If $k$ is a field, $A$ and $B$ $\!k\!$-algebras,
$M$ a faithful left $\!A\!$-module, and
$N$ a faithful left $\!B\!$-module, we recall the proof that
the left $\!A\otimes_k B\!$-module $M\otimes_k N$ is again faithful.
If $k$ is a general commutative ring, we note some conditions
on $A,$ $B,$ $M$ and $N$ that do, and others that do not, imply the
same conclusion.
Finally, we note a version of the main result that does not
involve any algebra structures on $A$ and $B.$
\end{abstract}

\maketitle

I needed Theorem~\ref{T.OX} below, and eventually found a
roundabout proof of it.
Ken Goodearl found a simpler proof, which I simplified further
to the argument given here.
But it seemed implausible that such a result would not be
classical, and I posted a query \cite{query},
to which Benjamin Steinberg responded, noting that
Passman had proved the result in~\cite[Lemma~1.1]{DP}.
His proof is virtually identical to one below.

In the mean time, I had made some observations on
what is true when the base ring $k$ is not a field, and
on the ``irrelevance'' of the algebra structures of $A$ and $B,$
and added these to the write-up.
So while I no longer expect to publish this note,
I will arXiv it, and keep it online as an unpublished note,
to make those observations available.

(Ironically, I eventually found an easier way to get
the result for which I had needed Theorem~\ref{T.OX}.)

\section{The main statement, and a generalization}\label{S.main}

Except where the contrary is stated, we understand algebras
to be associative, but not necessarily unital.

For $k$ a commutative ring, $A$ and $B$ $\!k\!$-algebras,
$M$ a left $\!A\!$-module, and $N$ a left $\!B\!$-module, we
recall the natural structure of left $\!A\otimes_k B\!$-module
on $M\otimes_k N$: An element
\begin{equation}\begin{minipage}[c]{35pc}\label{d.aOXb}
$f\ =\ \sum_{1\leq i\leq n}\ a_i\otimes b_i\ \in\ A\otimes_k B,$
\quad where $a_i\in A,\ b_i\in B,$
\end{minipage}\end{equation}
acts on decomposable elements $u\otimes v$ $(u\in M,\,v\in N)$
of $M\otimes_k N$ by
\begin{equation}\begin{minipage}[c]{35pc}\label{d.aOXb(uOXv)}
$u\otimes v\ \mapsto\ \sum_i\ a_i u\,\otimes\,b_i v.$
\end{minipage}\end{equation}
Since the right-hand side is bilinear in $u$ and $v,$ this map
extends $\!k\!$-linearly to general elements of $M\otimes_k N.$
The resulting action is easily shown to be
compatible with the $\!k\!$-algebra structure of $A\otimes_k B.$

\begin{theorem}\label{T.OX}
Let $k$ be a field, $A$ and $B$ $\!k\!$-algebras,
$M$ a faithful left $\!A\!$-module, and
$N$ a faithful left $\!B\!$-module.
Then the left $\!A\otimes_k B\!$-module $M\otimes_k N$
is also faithful.
\end{theorem}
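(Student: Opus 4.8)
The plan is to assume that an element $f=\sum_{1\le i\le n}a_i\otimes b_i$ of $A\otimes_k B$ annihilates $M\otimes_k N$ and to deduce $f=0$. Since $k$ is a field, $B$ is a $k$-vector space, and by collecting terms over a $k$-basis of the span of the $b_i$ we may assume $b_1,\dots,b_n$ are linearly independent over $k$. With that normalization, $f=0$ is equivalent to $a_1=\dots=a_n=0$: completing the $b_i$ to a $k$-basis of $B$ exhibits $A\otimes_k B$ as the direct sum of the subspaces $A\otimes b$ over the basis elements $b$, and $f$ lies in the part coming from $b_1,\dots,b_n$. So it suffices to prove each $a_i=0$. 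Unwinding the definition of the module action, the hypothesis says $\sum_i a_i u\otimes b_i v=0$ in $M\otimes_k N$ for all $u\in M$ and $v\in N$.

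Next I would apply functionals on the $N$ side. For $\lambda$ in the $k$-dual $N^*=\mathrm{Hom}_k(N,k)$, the map $\mathrm{id}_M\otimes\lambda\colon M\otimes_k N\to M\otimes_k k=M$ is well-defined and $k$-linear; applying it to the relation above yields $\sum_i\lambda(b_i v)\,a_i u=0$ in $M$ for all $u\in M$, $v\in N$, $\lambda\in N^*$. Since $M$ is faithful and $u$ is arbitrary, $\sum_i\lambda(b_i v)\,a_i=0$ in $A$ for all $v\in N$ and all $\lambda\in N^*$.

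It remains to deduce $a_1=\dots=a_n=0$ from this family of relations; equivalently, I must show the vectors $\bigl(\lambda(b_1v),\dots,\lambda(b_nv)\bigr)$ of $k^n$ span $k^n$ as $v$ runs over $N$ and $\lambda$ over $N^*$. This is the heart of the matter. Suppose $(c_1,\dots,c_n)\in k^n$ satisfies $\sum_i c_i\,\lambda(b_iv)=0$ for all such $v,\lambda$; then $\lambda\bigl(\sum_i c_i b_i v\bigr)=0$ for every $\lambda$ and $v$. Because $k$ is a field, $N^*$ separates points of $N$, so $\sum_i c_i b_i$ annihilates $N$; faithfulness of $N$ gives $\sum_i c_i b_i=0$; and linear independence of the $b_i$ forces $c_1=\dots=c_n=0$. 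Thus no nonzero vector of $k^n$ is killed by all the $\bigl(\lambda(b_iv)\bigr)_i$, so they span $k^n$ (again using that $k$ is a field), and each $a_j$ is then a $k$-combination of the corresponding $\sum_i\lambda(b_iv)\,a_i=0$, hence $a_j=0$ and $f=0$.

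I expect the only genuine obstacle to be that final spanning step. The tempting shortcut—choosing a single $v\in N$ for which $b_1v,\dots,b_nv$ are linearly independent—fails in general (for instance with $B=M_2(k)$ and $b_1=e_{11}$, $b_2=e_{12}$, the elements $b_1v$ and $b_2v$ are proportional for every $v$), so one must let $v$ vary and pass through $N^*$, which is exactly where faithfulness of $N$ enters. Everything else is bookkeeping; note that both faithfulness hypotheses get consumed—that of $M$ when passing from relations in $M$ to relations in $A$, and that of $N$ (together with the field hypothesis) in the spanning argument.
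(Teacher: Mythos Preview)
Your proof is correct. It shares the key ingredients with the paper's argument---writing $f$ with the $b_i$ linearly independent over $k$, and invoking a linear functional to pass between tensor factors---but is organized differently. The paper proceeds by direct witness: given $f\neq 0$, it chooses $u\in M$ with some $a_iu\neq 0$ (faithfulness of $M$), then a functional $\varphi\in M^*$ with some $\varphi(a_iu)\neq 0$, whence $\sum_i\varphi(a_iu)\,b_i\neq 0$ by linear independence of the $b_i$, and finally $v\in N$ with $\bigl(\sum_i\varphi(a_iu)\,b_i\bigr)v\neq 0$ (faithfulness of $N$); applying $\varphi\otimes\mathrm{id}_N$ then shows $f(u\otimes v)\neq 0$. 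You instead argue the contrapositive, apply functionals on the $N$ side rather than the $M$ side, use faithfulness of $M$ to pass from relations in $M$ to relations in $A$, and finish with a spanning argument in $k^n$. The paper's route is a bit shorter---a single well-chosen pair $(u,v)$ suffices, so no spanning step is needed---while yours makes the duality structure and the consumption of the two faithfulness hypotheses more explicit; your cautionary remark about the failure of the ``single $v$'' shortcut is well taken and is exactly why the paper, too, routes the argument through a functional rather than trying to keep the $b_iv$ independent.
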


\begin{proof}[Proof \textup{(after K.\,Goodearl, D.\,Passman).}]
Given nonzero $f\in A\otimes_k B,$
we wish to show that it has nonzero action on $M\otimes_k N.$
Clearly, we can choose an expression~\eqref{d.aOXb} for $f$
such that the $b_i$ are $\!k\!$-linearly independent.
(We could simultaneously make the $a_i$ $\!k\!$-linearly
independent, but will not need to.)
Since we have assumed~\eqref{d.aOXb} nonzero, not all of
the $a_i$ are zero; so as $M$ is a
faithful $\!A\!$-module, we can find $u\in M$ such
that not all the $a_i u\in M$ are zero.
Hence there exists a $\!k\!$-linear
functional $\varphi: M\to k$ such that not all of the
$\varphi(a_i u)$ are zero.
Since the $b_i$ are $\!k\!$-linearly independent, the element
$\sum_i \varphi(a_i u)\,b_i\in B$ will thus be nonzero.
So as $N$ is a faithful $\!B\!$-module, we can choose $v\in N$ such that
\begin{equation}\begin{minipage}[c]{35pc}\label{d.neq0}
$(\,\sum_i\,\varphi(a_i u)\,b_i)\,v\ \neq\ 0$\quad in $N.$
\end{minipage}\end{equation}

We claim that for the above choices of $u$ and $v,$ if we apply $f$ to
$u\otimes v\in M\otimes_k N,$ the result, i.e.,
the right-hand side of~\eqref{d.aOXb(uOXv)}, is nonzero.
For if we apply to that element the map $\varphi\otimes\r{id}_N:
M\otimes_k N\to k\otimes_k N\cong N,$ we get the
nonzero element~\eqref{d.neq0}.
Thus, as required, $f$ has nonzero action on $M\otimes_k N.$
\end{proof}

The above result assumes $k$ a field.
Succumbing to the temptation to examine what the method
of proof can be made to give in the absence of that
hypothesis, we record

\begin{corollary}[to proof of Theorem~\ref{T.OX}]\label{C.OX}
Let $k$ be a commutative ring, $A$ and $B$ $\!k\!$-algebras,
$M$ a faithful left $\!A\!$-module, and
$N$ a faithful left $\!B\!$-module.

Suppose, moreover, that elements of $M$
can be separated by $\!k\!$-module homomorphisms $M\to k,$ and that
every finite subset of $B$ belongs to a free $\!k\!$-submodule of~$B.$

Then the left $\!A\otimes_k B\!$-module $M\otimes_k N$
is again faithful.
\end{corollary}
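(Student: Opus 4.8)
The plan is to run the proof of Theorem~\ref{T.OX} essentially word for word, the one extra ingredient being a substitute for the step where, over a field, one chooses an expression~\eqref{d.aOXb} for $f$ with the $b_i$ $\!k\!$-linearly independent (impossible, in general, over an arbitrary $k$). Here is the substitute. Given a nonzero $f\in A\otimes_k B,$ fix any expression $f=\sum_{1\leq i\leq n} a_i\otimes b_i$ as in~\eqref{d.aOXb}. By hypothesis, the finite set $\{b_1,\dots,b_n\}$ lies in a free $\!k\!$-submodule $F$ of $B;$ taking a basis of $F$ and keeping only the finitely many basis elements needed to express $b_1,\dots,b_n,$ we may write $b_i=\sum_{1\leq j\leq m} c_{ij}\,e_j$ with $c_{ij}\in k$ and with $e_1,\dots,e_m$ a $\!k\!$-linearly independent family in $B.$ Substituting into~\eqref{d.aOXb} and regrouping, $f=\sum_{1\leq j\leq m} a'_j\otimes e_j$ with $a'_j=\sum_i c_{ij}\,a_i\in A;$ and since $f\neq 0,$ the $a'_j$ cannot all vanish (otherwise each term $a'_j\otimes e_j,$ hence $f,$ would be zero).

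Next I would note that for $u\in M$ and $v\in N$ the element $f\cdot(u\otimes v)$ of $M\otimes_k N$ depends only on $f,$ and is computed by formula~\eqref{d.aOXb(uOXv)} from whichever expression~\eqref{d.aOXb} of $f$ one uses, in particular from $\sum_j a'_j\otimes e_j.$ So we may as well assume from the start that $f=\sum_{1\leq j\leq m} a'_j\otimes e_j$ with the $e_j$ $\!k\!$-linearly independent in $B$ and the $a'_j$ not all zero, which is exactly the configuration reached partway through the proof of Theorem~\ref{T.OX}.

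From there the earlier argument carries over, the two appeals to ``$k$ a field'' being covered by the two added hypotheses. Choose $j_0$ with $a'_{j_0}\neq 0.$ As $M$ is faithful over $A,$ there is $u\in M$ with $a'_{j_0}u\neq 0;$ as elements of $M$ are separated by $\!k\!$-module homomorphisms to $k,$ there is $\varphi:M\to k$ with $\varphi(a'_{j_0}u)\neq 0,$ whence the $\varphi(a'_j u)$ are not all zero. The $\!k\!$-linear independence of the $e_j$ then makes $\sum_j\varphi(a'_j u)\,e_j$ nonzero in $B,$ so faithfulness of $N$ provides $v\in N$ with $(\sum_j\varphi(a'_j u)\,e_j)\,v\neq 0.$ Applying $\varphi\otimes\r{id}_N:M\otimes_k N\to k\otimes_k N\cong N$ to $f\cdot(u\otimes v)=\sum_j a'_j u\otimes e_j v$ returns precisely this nonzero element; hence $f$ has nonzero action on $M\otimes_k N.$

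I do not expect a real obstacle here: the exercise is mainly to pin down which properties of a field were used. The step needing the most care is the re-expression in the first paragraph --- getting the $e_j$ genuinely $\!k\!$-linearly independent in $B$ is precisely what the hypothesis on $B$ provides --- together with the routine but worth-stating fact, used at the start of the second paragraph, that passing to this new expression of $f$ does not alter the element $f\cdot(u\otimes v).$
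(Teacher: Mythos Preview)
Your proof is correct and follows exactly the approach of the paper, which simply says the argument is ``exactly like the proof of Theorem~\ref{T.OX},'' with the hypothesis on $B$ supplying the $\!k\!$-linearly independent expression~\eqref{d.aOXb} and the hypothesis on $M$ supplying $\varphi.$ You have merely spelled out those two substitutions in full detail.
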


\begin{proof}
Exactly like the proof of Theorem~\ref{T.OX}.
The added hypothesis on $B$ is what we need to conclude that any
element of $A\otimes_k B$ can be written in the form~\eqref{d.aOXb}
with $\!k\!$-linearly independent $b_i;$ the added hypothesis
on $M$ is what we need to construct $\varphi.$
(Of course, the parenthetical comment in the proof of Theorem~\ref{T.OX}
about also making the
$a_i$ $\!k\!$-linearly independent does not go over.)
\end{proof}

Warren Dicks (personal communication) pointed
out early on another proof of Theorem~\ref{T.OX}:
The actions of $A$ and $B$ on $M$ and $N$ yield embeddings
$A\to\r{End}_k(M)$ and $B\to\r{End}_k(N).$
Taking $\!k\!$-bases $X$ and $Y$ of $M$ and $N,$ one
can regard the underlying vector spaces of
$\r{End}_k(M)$ and $\r{End}_k(N)$ as $M^X$ and $N^Y.$
By two applications of \cite[II, \S3, \textnumero 7,
Cor.~3 to Prop.~7, p.AII.63]{Bourbaki},
or one of \cite[Theorem~2]{KG},
one concludes that the natural map
$M^X\otimes_k N^Y\to(M\otimes_k N)^{X\times Y}$
is an embedding, and deduces that the map
$A\otimes_k B\to\r{End}(M\otimes_k N)$ is an embedding,
the desired conclusion.

\section{Counterexamples to variant statements}\label{S.cegs}

The hypotheses of the above corollary are strikingly asymmetric in the
pairs $(A,M)$ and $(B,N).$
We can, of course, get the same conclusion if we interchange the
assumptions on these pairs.
But what if we try to use one or the other hypothesis
on both pairs; or concentrate both hypotheses on one of them?

It turns out that none of these modified hypotheses
guarantees the stated conclusion.
Here are three closely related constructions that give
the relevant counterexamples.
In all three examples, the $\!k\!$-algebras $A$ and $B$ are
in fact commutative and unital.

\begin{lemma}\label{L.OX.cegs}
Let $C$ be a commutative principal ideal domain with infinitely
many primes ideals, and let $P_0,$ $P_1$ be two disjoint infinite
sets of nonzero prime ideals of $C.$
Then for $k,$ $A,$ $M,$ $B,$ $N$ specified in each of the following
three ways, the $\!A\!$-module $M$ and the $\!B\!$-module $N$
are faithful, and $A\otimes_k B$ is nonzero, but
$M\otimes_k N$ is zero, hence not faithful.
In each example, the variant of the hypotheses of Corollary~\ref{C.OX}
satisfied by that example is noted at the end of the description.
\vspace{.2em}

\textup{(i)}  Let $A=B=k=C,$ and let
$M=\bigoplus_{p\in P_0} k/p$ and $N=\bigoplus_{p\in P_1} k/p.$
In this case, every finite subset of $A$ or of $B$
\textup{(}trivially\textup{)}
lies in a free $\!k\!$-submodule of that algebra.
\vspace{.2em}

In the remaining two examples,
let $C^+$ be the commutative $\!C\!$-algebra obtained by adjoining
to $C$ an indeterminate $y_p$ for each $p\in P_0\cup P_1,$
and imposing the relations $p\,y_p=\{0\}$ for each such $p.$
\vspace{.2em}

\textup{(ii)}  Let $k=C^+,$
let $M$ be the ideal of $k$ generated by the $y_p$ for $p\in P_0,$
let $N$ be the ideal of $k$ generated by the $y_p$ for $p\in P_1,$
let $A=k/N,$ and let $B=k/M.$
Note that since $MN=\{0\},$
we may regard $M$ as an $\!A\!$-module and $N$ as a $\!B\!$-module.
In this case, $M$ and $N$ embed in $k,$ hence
$\!k\!$-module homomorphisms from each of those
modules into $k$ separate elements.
\vspace{.2em}

\textup{(iii)}  Let $k=A=C^+,$ and let $M$ be the ideal
of $A$ generated by all the $y_p.$
\textup{(}The partition of our infinite family of primes
into $P_0$ and $P_1$ will not be used here.\textup{)}
On the other hand, let $B$ be the
field of fractions of $C,$ made a $\!k\!$-algebra by
first mapping $k$ to $C$ by sending the $y_p$ to $0,$
then mapping $C$ to its field of fractions;
and let $N$ be any nonzero $\!B\!$-vector-space.
In this case, every finite subset of $A$
\textup{(}trivially\textup{)} lies in a free $\!k\!$-submodule, and
$\!k\!$-module homomorphisms into $k$ clearly separate elements of $M.$
\end{lemma}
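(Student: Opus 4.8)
The plan is to treat the three constructions together wherever possible, since they share the same underlying mechanism: each $k/p$ (or more precisely each generator $y_p$) is killed by the prime $p$, and the two families $P_0,P_1$ being disjoint means that a generator on the $M$-side and a generator on the $N$-side are killed by coprime ideals, which forces their tensor product to vanish. The key arithmetic fact I would isolate first is this: if $p,q$ are distinct nonzero primes of a PID $C$, and $S$ is a $C$-module with $pS=0$ while $T$ is a $C$-module with $qT=0$, then $S\otimes_C T=0$; indeed $p+q=C$, so picking $a\in p$, $b\in q$ with $a+b=1$ we get $s\otimes t=(a+b)(s\otimes t)=(as)\otimes t+s\otimes(bt)=0$ for all decomposable tensors. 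This single observation drives all three examples, modulo keeping track of which base ring the tensor product is over.

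For (i), the base ring is $k=C$ itself, $M=\bigoplus_{p\in P_0}C/p$ and $N=\bigoplus_{q\in P_1}C/q$, and $M\otimes_C N\cong\bigoplus_{p\in P_0,\,q\in P_1}(C/p\otimes_C C/q)$; since $P_0\cap P_1=\varnothing$ every summand vanishes by the observation above, so $M\otimes_C N=0$. Faithfulness of $M$ over $A=C$ holds because $\bigcap_{p\in P_0}p=0$ in a PID with infinitely many primes (a nonzero element lies in only finitely many primes), and symmetrically for $N$; and $A\otimes_C B=C\neq0$. The parenthetical "free submodule" remark is trivially true since $A=B=C$ is free of rank $1$ over itself.

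For (ii) and (iii) I would first record the structure of $C^+$: it is the $C$-algebra $C[\,y_p : p\in P_0\cup P_1\,]/(\,p\,y_p\,)$, so as a $C$-module it decomposes as $C\oplus\bigoplus_p (C/p)\,\bar y_p\oplus(\text{higher-degree terms, each a quotient of }C\text{ by the relevant product of }p\text{'s})$; the only point I really need is that $M$ (resp.\ $N$) is, as a $C$-module, a direct sum of cyclic modules each annihilated by some $p\in P_0$ (resp.\ $q\in P_1$), that $MN=0$ (a product $y_p y_q$ is killed by both $p$ and $q$, and $p+q=C$ kills it), and that $M$ embeds as an ideal in $k$. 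In (ii), since $M$ is a $k$-module killed by $N$ it is an $A=k/N$-module, faithful because $\mathrm{ann}_k M\subseteq N$: an element of $k$ annihilating every $y_p$, $p\in P_0$, must (looking in the summand $\bigoplus_{p\in P_0}(C/p)\bar y_p$) have zero $C/p$-component there, and unwinding the algebra structure this forces it into $N$; symmetrically $N$ is faithful over $B=k/M$; and $A\otimes_k B=k/(M+N)=C\neq0$. The vanishing $M\otimes_k N=0$ follows because $M\otimes_k N=M\otimes_k(k/M\otimes_k\text{(stuff)})$—more directly, $M\otimes_k N$ is generated by tensors $y_p\otimes y_q$ with $p\in P_0$, $q\in P_1$, and writing $1=a+b$ with $a\in p$, $b\in q$ in $C\subseteq k$ gives $y_p\otimes y_q=(ay_p)\otimes y_q+y_p\otimes(by_q)=0$. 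In (iii), $M\otimes_k N$ with $N$ a $B$-vector space and $B$ the fraction field $\mathrm{Frac}(C)$ acting through the map $k\to C\to\mathrm{Frac}(C)$ that kills all $y_p$: then $M\otimes_k N=M\otimes_k(k\otimes_k N)$, and because every $y_p\in M$ is annihilated by some nonzero $p\in C$ while $N$ is a $\mathrm{Frac}(C)$-vector space on which that same $p$ acts invertibly, $y_p\otimes v=p^{-1}\cdot(p y_p)\otimes v=0$; hence $M\otimes_k N=0$. Faithfulness of $M$ over $A=k$ is the statement $\mathrm{ann}_k M=0$, which holds because $\bigcap_p p=0$ in $C$ and $k$ is a (non-PID, but still) domain-free... actually $k$ has zero-divisors, so I would argue directly: if $f\in k$ kills every $y_p$ then in particular $f y_p=0$, and by degree considerations this forces $f=0$. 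Faithfulness of $N$ over $B$ is clear since $B$ is a field and $N\neq0$; and $A\otimes_k B=B=\mathrm{Frac}(C)\neq0$.

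The main obstacle I anticipate is not the tensor-vanishing—that is uniformly the coprime-primes trick—but rather the bookkeeping in (ii) and (iii) to verify \emph{faithfulness}, i.e.\ computing $\mathrm{ann}_k M$ and $\mathrm{ann}_k N$ precisely inside the somewhat baroque ring $C^+$, which has nilpotent-like zero-divisors ($p\,y_p=0$ with neither factor zero). I would handle this by fixing once and for all the $C$-module direct-sum decomposition of $C^+$ by monomials in the $y_p$ with the relations imposed degree-by-degree, so that "annihilates $y_p$" becomes a coordinate condition, and then reading off that the annihilators land exactly where claimed. The remaining assertions (that $A\otimes_k B\neq0$, and the noted hypothesis-variants of Corollary~\ref{C.OX}) are immediate once the ring structure is in hand.
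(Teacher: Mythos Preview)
Your proposal is correct and follows essentially the same approach as the paper's proof. You make explicit the coprime-annihilator trick (writing $1=a+b$ with $a\in p$, $b\in q$) that the paper leaves implicit when it calls the vanishing of $M\otimes_k N$ ``clear,'' and you correctly identify that the only real work lies in verifying faithfulness inside $C^+$; the paper handles this exactly as you outline, via the monomial decomposition $C^+\cong C\oplus\bigoplus_p\bigoplus_{i\ge 1}(C/p)\,y_p^i$ (using $y_p y_{p'}=0$ for $p\neq p'$) and a case split on whether the constant term in $C$ is zero.
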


\begin{proof}
The fact that $P_0$ and $P_1$
are infinite sets of primes in the commutative principal
ideal domain $C$ implies that each of those sets has zero intersection,
hence that the $M$ and $N$ of~(i) are faithful $\!C\!$-modules,
equivalently, are a faithful $\!A\!$-module and a
faithful $\!B\!$-module.
That their tensor product is zero is clear.

Similar considerations show that in~(ii), any element
of $A$ or $B$ with nonzero constant term in $C$ acts
nontrivially on $M,$ respectively, $N.$
On the other hand, a nonzero element of $A$ or $B$ with
zero constant term, i.e., a nonzero element $u$ of the
ideal $M$ or $N,$ will act nontrivially on the
module $M,$ respectively, $N,$ because for some $p$ in $P_0,$
respectively $P_1,$ the element $u$ must involve a nonzero
polynomial in $y_p,$ which will have nonzero action on $y_p\in M$
or $N$ as the case may be.
(Since $y_p\,y_{p'}=\{0\}$ for $p\neq p',$ every element of $M$ or $N$
is a sum of one-variable polynomials in the various $y_p$
with zero constant term.)
Again, it is clear that $M\otimes_k N=\{0\}.$
On the other hand,
$A\otimes_k B = (k/M)\otimes_k (k/N)\cong k/(M+N)\cong C\neq\{0\}.$

In case~(iii), $M$ is faithful over $A$ for the same
reason as in~(ii), while faithfulness of $N$ over $B$ is clear,
as is the condition $M\otimes_k N=\{0\}.$
On the other hand, since $A$ admits a homomorphism into $C,$
we have $A\otimes_k B\neq\{0\}.$
\end{proof}

We remark that in the above
constructions, the condition that the principal ideal domain $k$
have infinitely many primes can be weakened to say that it
has at least two primes, $p_0$ and $p_1,$ by replacing
the modules $\bigoplus_{p\in P_0} k/p$
and $\bigoplus_{p\in P_1} k/p$ in~(i) with
$\bigoplus_{n>0} k/p_0^n$ and $\bigoplus_{n>0} k/p_1^n,$
and making similar adjustments in~(ii) and~(iii).
(In the last, only one prime is needed.)
One just has to be a little more careful in the proofs.

In another direction, one may ask:

\begin{question}[suggested by A.\,Ogus, personal communication]\label{Q.OX}
If we add to the hypotheses of Corollary~\ref{C.OX}
the condition that $M$ be finitely generated
as an $\!A\!$-module, and/or that $N$ be finitely generated
as a $\!B\!$-module, can some of the other hypotheses of that
corollary be weakened, dropped, or modified
\textup{(}perhaps in some of the ways that Lemma~\ref{L.OX.cegs}
shows is {\em not} possible without such finite
generation conditions\textup{)}?
\end{question}

\section{$A$ and $B$ don't have to be algebras}\label{S.nonalg}

The sharp-eyed reader may have noticed that the proof of
Theorem~\ref{T.OX} makes no use of the algebra structures
of $A$ and $B.$
This led me to wonder whether the result was actually a special case
of a statement that involved no such structure.
As Theorem~\ref{T.nonalg} below shows, the answer is, in a way, yes.
But as the second proof of that theorem shows, one can equally regard
Theorem~\ref{T.nonalg} as a special case of Theorem~\ref{T.OX}.

Given a commutative ring $k,$ and $\!k\!$-modules
$A,$ $M_0$ and $M_1,$ let us define an {\em action}
of $A$ on $(M_0,M_1)$ to mean
a $\!k\!$-linear map $A\to\r{Hom}_k(M_0,M_1),$ which will be written
$(a,u)\mapsto au$ $(a\in A,\ u\in M_0,\ au\in M_1);$
and let us call such an action {\em faithful} if it is one-to-one
as a map $A\to\r{Hom}(M_0,M_1).$

Given two actions, one of a $\!k\!$-module
$A$ on a pair $(M_0,M_1)$ and the other
of a $\!k\!$-module $B$ on a pair $(N_0,N_1),$
we see that an action of $A\otimes_k B$ on
$(M_0\otimes_k N_0,\,M_1\otimes_k N_1)$
can be defined just as for algebras, with
each element~\eqref{d.aOXb} acting by~\eqref{d.aOXb(uOXv)}.

\begin{theorem}\label{T.nonalg}
Let $k$ be a field, and suppose we are given
an action of a $\!k\!$-vector-space $A$
on a pair $(M_0,M_1)$ and an action
of a $\!k\!$-vector-space $B$ on a pair $(N_0,N_1).$

Then if each of these actions is faithful, so is the
induced action of the $\!k\!$-vector-space $A\otimes_k B$ on the pair
$(M_0\otimes_k N_0,\,M_1\otimes_k N_1).$
\end{theorem}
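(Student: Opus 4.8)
The plan is to imitate the proof of Theorem~\ref{T.OX} essentially verbatim, replacing ``module'' by ``action on a pair'' throughout. Given a nonzero $f \in A \otimes_k B$, I first write it as $f = \sum_i a_i \otimes b_i$ with the $b_i$ linearly independent over $k$ (possible since $k$ is a field). Not all $a_i$ vanish, so by faithfulness of the $A\!$-action on $(M_0,M_1)$ there is some $u \in M_0$ with not all $a_i u \in M_1$ equal to zero. Choose a $\!k\!$-linear functional $\varphi \colon M_1 \to k$ not killing all the $a_i u$; then $\sum_i \varphi(a_i u)\, b_i \in B$ is nonzero because the $b_i$ are independent. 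By faithfulness of the $B\!$-action, there is $v \in N_0$ with $(\sum_i \varphi(a_i u)\, b_i)\, v \neq 0$ in $N_1$. Applying $\varphi \otimes \r{id}_{N_1}\colon M_1 \otimes_k N_1 \to k \otimes_k N_1 \cong N_1$ to $f\cdot(u\otimes v) = \sum_i a_i u \otimes b_i v$ yields this nonzero element, so $f$ acts nontrivially.

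The only point that needs a moment's thought is why the single functional $\varphi$ ``sees'' all the $a_i u$ correctly at once; but since finitely many of the $a_i u$ live in the $\!k\!$-vector-space $M_1$ and at least one is nonzero, an elementary linear-algebra argument (extend a basis, or just pick coordinates) produces a $\varphi$ with $\varphi(a_j u) \neq 0$ for some particular $j$, which is all we use. No algebra structure, and indeed no hypothesis that the three pairs of spaces $M_0,M_1$, $N_0,N_1$, $M_0\otimes N_0, M_1\otimes N_1$ be related by anything beyond the given linear maps, is needed anywhere; this is exactly why the generalization goes through.

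Alternatively — and this is the second proof hinted at in the text — one can deduce Theorem~\ref{T.nonalg} from Theorem~\ref{T.OX} by a formal device: turn the pair $(M_0,M_1)$ into a genuine module over a genuine algebra. For instance, let $\tilde A = k \oplus \r{Hom}_k(M_0,M_1) \oplus \cdots$ be built so that $\tilde M = M_0 \oplus M_1$ becomes a faithful $\tilde A\!$-module with $A$ embedding into $\tilde A$ compatibly with the action on $\tilde M$, and similarly for $B$; then apply Theorem~\ref{T.OX} to $\tilde A \otimes_k \tilde B$ acting on $\tilde M \otimes_k \tilde N$ and restrict along $A \otimes_k B \hookrightarrow \tilde A \otimes_k \tilde B$, noting that $M_1 \otimes_k N_1$ is a direct summand of $\tilde M \otimes_k \tilde N$ on which the $A\otimes_k B\!$-action lands. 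The main obstacle there is bookkeeping: arranging the auxiliary algebra so that faithfulness of the action on the pair corresponds precisely to faithfulness of a module, and so that the relevant summand of the tensor product is genuinely preserved. I expect the direct imitation of the proof of Theorem~\ref{T.OX} to be cleaner, so that is the route I would write up as the primary proof, mentioning the reduction to Theorem~\ref{T.OX} as a remark.
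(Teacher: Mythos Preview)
Your primary proof is correct and is essentially identical to the paper's first proof: same choice of $u\in M_0$, same functional $\varphi$ on $M_1$, same $v\in N_0$, same application of $\varphi\otimes\r{id}_{N_1}$.

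Your alternative reduction to Theorem~\ref{T.OX} is in the right spirit but unnecessarily complicated. The paper's second proof avoids building any auxiliary algebra $\tilde A\supseteq A$: it simply puts the \emph{zero} multiplication on $A$ and $B$ themselves, makes $M=M_0\oplus M_1$ an $A$-module via $a(u_0,u_1)=(0,\,a\,u_0)$ (and likewise $N=N_0\oplus N_1$), and observes that faithfulness of the action on the pair is exactly faithfulness of this module. Then Theorem~\ref{T.OX} applies directly to $A\otimes_k B$ acting on $M\otimes_k N$, and one reads off the conclusion from the fourfold decomposition $M\otimes_k N=\bigoplus_{i,j} M_i\otimes_k N_j$, since $A\otimes_k B$ kills every summand except $M_0\otimes_k N_0$ and lands in $M_1\otimes_k N_1$. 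No embedding $A\hookrightarrow\tilde A$ or bookkeeping is needed.
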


\begin{proof}[First proof.]
Exactly like proof of Theorem~\ref{T.OX}.
(Note that $u$ will be chosen from $M_0,$ while
$\varphi$ will be a $\!k\!$-linear functional on $M_1;$ and that
$v$ will be chosen from $N_0$ to make~\eqref{d.neq0}
hold in $N_1.)$\\[.2em]
{\em Second proof.}
Let us make $A$ and $B$ into $\!k\!$-algebras by giving them
zero multiplication operations.
Then we can make the vector space $M=M_0\oplus M_1$ a left
$\!A\!$-module using the action $a(u_0,u_1)=(0,a\,u_0),$
and similarly make $N=N_0\oplus N_1$ a $\!B\!$-module.
The faithfulness hypotheses on the given actions clearly
make these modules faithful.

Hence by Theorem~\ref{T.OX}, $M\otimes_k N$ is
a faithful $\!A\otimes_k B\!$-module.
Now $M\otimes_k N$ is a fourfold direct sum
$(M_0\otimes_k N_0)\oplus (M_0\otimes_k N_1)\oplus
(M_1\otimes_k N_0)\oplus (M_1\otimes_k N_1);$
but the action of $A\otimes_k B$ annihilates all
summands but the first, and has image in the last, so
its faithfulness means that every nonzero element
of $A\otimes_k B$ induces a nonzero map from
$M_0\otimes_k N_0$ to $M_1\otimes_k N_1,$
which is the desired conclusion.
\end{proof}

Of course, the analog of Corollary~\ref{C.OX}
holds for actions as in Theorem~\ref{T.nonalg}.

\section{Remarks}\label{S.remarks}
Composing the proof of Theorem~\ref{T.nonalg}
from Theorem~\ref{T.OX}, and the proof of Theorem~\ref{T.OX}
from Theorem~\ref{T.nonalg},
we see that the general case of Theorem~\ref{T.OX}
follows from the zero-multiplication case of the same theorem.
This is striking, since the main interest of the result is for
algebras with nonzero multiplication.

One may ask why I made the convention that
algebras are associative, if the algebra
operations were not used in the theorem.
The answer is that there is no natural definition of
a module over a not-necessarily-associative algebra.
(There is a definition of a module over a Lie algebra,
based on the motivating relation between Lie algebras
and associative algebras.
But there is no natural definition
of a Lie or associative structure on a tensor product of Lie
algebras, so the result can't be used in that case.)

Which of Theorems~\ref{T.OX} and~\ref{T.nonalg} is the ``nicer''
result?
I would say that Theorem~\ref{T.nonalg} shows with less
distraction what is going on, while Theorem~\ref{T.OX}
is likely to be more convenient for applications.

\end{document}